\theoremstyle{plain}
\newtheorem{Thm}{Theorem}
\newtheorem{Lem}[Thm]{Lemma}
\newtheorem{Prop}[Thm]{Proposition}
\newtheorem{Rem}[Thm]{Remark}
\newcommand{\cC}{\ensuremath{\mathcal{C}}}
\newcommand{\cG}{\ensuremath{\mathcal{G}}}
\newcommand{\cH}{\ensuremath{\mathcal{H}}}
\newcommand{\bi}[2]{{#1 \choose #2}}
\journal{arXiv}
\begin{document}

\begin{frontmatter}

\title{The competition number of a graph and the dimension of its hole space}

\author[label1,label5]{Suh-Ryung KIM}
\author[label2]{Jung Yeun LEE}
\author[label3]{Boram PARK}
\author[label4,label6]{Yoshio SANO \corref{cor1}}

\address[label1]{Department of Mathematics Education,
Seoul National University, Seoul 151-742, Korea.}
\address[label2]{National Institute for Mathematical Sciences,
Daejeon 305-390, Korea.}
\address[label3]{Department of Economics,
Seoul National University, Seoul 151-742, Korea.}
\address[label4]{National Institute of Informatics, 
Tokyo 101-8430, Japan}
\fntext[label5]{This research was supported by Basic Science Research
Program through the National Research Foundation of Korea (NRF) funded
by the Ministry of Education, Science and Technology (700-20100058).}
\fntext[label6]{The author was supported 
by JSPS Research Fellowships for Young Scientists.}

\cortext[cor1]{Corresponding author:
sano@nii.ac.jp ; y.sano.math@gmail.com}

\begin{abstract}
The competition graph of a digraph $D$ is a (simple undirected) graph
which has the same vertex set as $D$
and has an edge between $x$ and $y$ if and only if
there exists a vertex $v$ in $D$ such that
$(x,v)$ and $(y,v)$ are arcs of $D$.
For any graph $G$, $G$ together with sufficiently many isolated vertices
is the competition graph of some acyclic digraph.
The competition number $k(G)$ of $G$
is the smallest number of such isolated vertices.
In general, it is hard to compute the competition number $k(G)$
for a graph $G$ and it has been one of important research problems
in the study of competition graphs to characterize a graph
by its competition number. Recently, the relationship
between the competition number and the number of holes of a graph
is being studied.
A hole of a graph is a cycle of length at least $4$ as an induced subgraph.
In this paper, we conjecture that the dimension of the
hole space of a graph is no smaller than the competition number of
the graph. We verify this conjecture for various kinds of graphs and
show that our conjectured inequality is indeed an equality for
connected triangle-free graphs.
\end{abstract}

\begin{keyword}
competition graph; competition number; cycle space; hole; hole space

\MSC[2010] 05C75, 05C20

\end{keyword}

\end{frontmatter}

\section{Introduction}

Suppose $D$ is an acyclic digraph.
The {\it competition graph} of $D$, denoted by $C(D)$,
is the (simple undirected) graph which
has the same vertex set as $D$
and has an edge between two distinct vertices $x$ and $y$
if and only if
there exists a vertex $v$ in $D$ such that $(x,v)$ and $(y,v)$
are arcs of $D$.
For any graph $G$, $G$ together with sufficiently many isolated
vertices is the competition graph of an acyclic digraph.
From this observation, Roberts \cite{cn}
defined the {\em competition number} $k(G)$ of
a graph $G$ to be the smallest number $k$ such that $G$
together with $k$ isolated vertices is
the competition graph of an acyclic digraph.

The notion of competition graph was introduced by Cohen~\cite{co}
as a means of determining the smallest dimension
of ecological phase space.
Since then, various variations have been defined and studied
by many authors (see \cite{kimsu,lu} for surveys).
Besides an application to ecology, the concept of competition graph
can be applied to a variety of fields, as summarized in \cite{RayRob}.

Roberts~\cite{cn} observed that characterization of competition
graph is equivalent to computation of competition number.
It does not seem to be easy in general to compute $k(G)$
for a given graph $G$,
as Opsut~\cite{op} showed that the computation of the
competition number of a graph is an NP-hard problem
(see \cite{kimsu,KF} for graphs whose competition numbers are known).
It has been one of important research problems in the study of
competition graphs to characterize a graph by its competition number.
From this point of view, Cho and Kim~\cite{ck} and Kim~\cite{compone}
studied the relationship between the competition number and the number
of holes of a graph.
A cycle of length at least 4 of a graph
as an induced subgraph is called a {\em hole} of the graph
and a graph without holes is called a {\em chordal graph}.
For a graph $G$, we denote the set of all holes of $G$ by $H(G)$
and denote the number of holes of $G$ by $h(G)$. The chordal graphs 
(Roberts \cite{cn}) and
the family of graphs with exactly one hole (Cho and Kim \cite{ck})
satisfy the inequality $k(G) \le h(G)+1$. Based on these facts,
Kim~\cite{compone} conjectured that $k(G) \le h(G)+1$ for a graph $G$
and the following are the families of graphs which were found
in efforts to answer the conjecture.
\begin{itemize}
\item
$\cG_1=\{G \mid h(G)=2\}$
(Lee, Kim, Kim, and Sano~\cite{twoholes}; Li and Chang~\cite{LC});

\item
$\cG_2=\{G \mid \mbox{all the holes of $G$ are independent}\}$
(Li and Chang \cite{indep});

\item
$\cG_3=\{G \mid \mbox{any two distinct holes of $G$
are mutually edge-disjoint}\}$
(Kim, Lee, and Sano \cite{KLS});

\item
$\cG_4=\{G \mid (\forall C \in H(G))(\exists e_C \in E(C))
[e_C \mbox{ belongs to no other induced cycle of }G]\}$
(Kamibeppu \cite{Kamibeppu});
\end{itemize}
where a hole $C$ of a graph $G$ is called {\it independent}
if, for any hole $C'$ of $G$ other than $C$, 
the following two conditions hold: 
\begin{itemize}
\item[-]
$|V(C) \cap V(C')| \leq 2$.
\item[-]
If $|V(C) \cap V(C')| = 2$,
then $|E(C) \cap E(C')|=1$ and $|V(C)| \geq 5$.
\end{itemize}

Lee~{\it et al.}~\cite{smallcomp} 
also studied on graphs
having many holes but with small competition number.
In this paper, we propose the dimension of its hole space
as an upper bound for the competition number of a graph
and show that the inequality holds for various families of graphs,
including the graph families given above except those graphs having holes
sharing edges.
As a matter of fact, this bound equals the competition number
for several interesting classes of graphs
including the family of a nontrivial connected triangle-free graphs.

\section{The hole space of a graph}

Let $\mathbb{F}_2$ be the finite field of order $2$.
We take a graph $G$ and let $\mathbb{F}_2^{E(G)}$ denote
the set of maps from $E(G)$ to $\mathbb{F}_2$.
For a cycle $C$ of $G$, we define a map
$\chi_C: E(G) \rightarrow \mathbb{F}_2$ by
\[
\chi_C(e) := \left\{
\begin{array}{cl}
1 & \mbox{ if } e \in E(C); \\
0 & \mbox{ otherwise.}
\end{array}
\right.
\]
(We may regard $\chi_C$ as a vector in $\mathbb{F}_2^{|E(G)|}$
once an edge labeling is given.)
Then
\[
\cC(G):=\mbox{Span}\{\chi_C \in \mathbb{F}_2^{E(G)} \mid C
\mbox{ is a cycle of } G \}
\]
is the cycle space of $G$.
For every connected graph $G$,
$\dim \cC(G)=|E(G)|-|V(G)|+1$ (see Theorem 1.9.6 of \cite{dstl}).
Now we set
\[
\cH(G):=\mbox{Span}\{\chi_C \in \mathbb{F}_2^{E(G)} \mid C \in H(G) \}.
\]
Since a hole is a cycle, $\cH(G)$ is a subspace of
the cycle space $\cC(G)$ of $G$.
We call $\cH(G)$ the {\em hole space} of $G$
and $\dim \cH(G)$ the {\em hole dimension} of $G$.

\begin{figure}[h]
\center{\scalebox{1}
{
\begin{pspicture}(0,-1.4129688)(4.1428123,1.4129688)
\psline[linewidth=0.02cm,fillcolor=black,dotsize=0.07055555cm 3.0]{**-}(0.7595312,1.0745312)(0.7595312,-0.40546882)
\psline[linewidth=0.02cm,fillcolor=black,dotsize=0.07055555cm 3.0]{**-}(3.3395312,-0.46546882)(3.3395312,1.0545311)
\psline[linewidth=0.02cm,fillcolor=black,dotsize=0.07055555cm 3.0]{**-}(3.419531,1.0345311)(0.7795312,1.0345311)
\psline[linewidth=0.02cm,fillcolor=black,dotsize=0.07055555cm 3.0]{**-}(0.6995312,-0.3854688)(3.3295312,-0.3954688)
\psdots[dotsize=0.13](2.0395312,1.0145313)
\psdots[dotsize=0.13](2.0395312,-0.3854688)
\usefont{T1}{ptm}{m}{n}
\rput(2.0523436,-1.2354687){$G$}
\psdots[dotsize=0.13](0.7595311,0.3145312)
\psdots[dotsize=0.13](3.3395312,0.3145312)
\psline[linewidth=0.02cm](2.0395312,-0.38546884)(0.75999993,0.31296876)
\psline[linewidth=0.02cm](2.0595312,1.0345311)(0.7799999,0.33296874)
\psline[linewidth=0.02cm](3.34,0.31296876)(2.0195312,1.0345311)
\usefont{T1}{ptm}{m}{n}
\rput(0.6523437,1.2245313){$v_1$}
\usefont{T1}{ptm}{m}{n}
\rput(0.6523437,-0.5954688){$v_2$}
\usefont{T1}{ptm}{m}{n}
\rput(2.0323436,-0.5954688){$v_7$}
\usefont{T1}{ptm}{m}{n}
\rput(3.4923437,-0.5954688){$v_3$}
\usefont{T1}{ptm}{m}{n}
\rput(3.4923437,1.2245313){$v_4$}
\usefont{T1}{ptm}{m}{n}
\rput(1.9923438,1.2245313){$v_5$}
\usefont{T1}{ptm}{m}{n}
\rput(0.41234374,0.3045312){$v_6$}
\usefont{T1}{ptm}{m}{n}
\rput(3.6523438,0.3045312){$v_8$}
\psdots[dotsize=0.12](2.04,0.33296874)
\usefont{T1}{ptm}{m}{n}
\rput(2.3323438,0.20453125){$v_9$}
\psline[linewidth=0.02cm](2.0395312,1.0345311)(2.0395312,-0.3654688)
\psline[linewidth=0.02cm](3.34,0.31296876)(2.04,-0.38703126)
\end{pspicture}
}
}
\caption{A graph with more than two triangles and more than two holes.}
\label{motivation}
\end{figure}
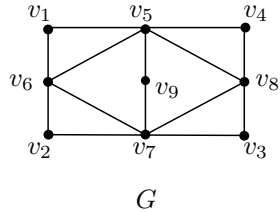

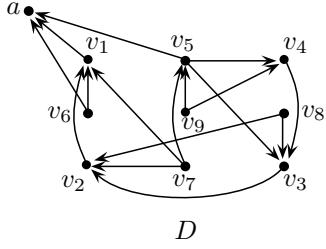
\begin{figure}[h]
\center{
\scalebox{1}
{
\begin{pspicture}(0,-1.6529688)(4.662812,1.6529688)
\psdots[dotsize=0.13](2.4795313,0.77453125)
\psdots[dotsize=0.13](2.4795313,-0.6254688)
\usefont{T1}{ptm}{m}{n}
\rput(2.4923437,-1.4754688){$D$}
\psdots[dotsize=0.13](1.1995311,0.0745312)
\psdots[dotsize=0.13](3.7795312,0.0745312)
\usefont{T1}{ptm}{m}{n}
\rput(1.3323437,0.9845312){$v_1$}
\usefont{T1}{ptm}{m}{n}
\rput(1.0123436,-0.8554688){$v_2$}
\usefont{T1}{ptm}{m}{n}
\rput(2.4723437,-0.8354688){$v_7$}
\usefont{T1}{ptm}{m}{n}
\rput(3.9323437,-0.8354688){$v_3$}
\usefont{T1}{ptm}{m}{n}
\rput(3.9323437,0.9845312){$v_4$}
\usefont{T1}{ptm}{m}{n}
\rput(2.4523437,1.0045313){$v_5$}
\usefont{T1}{ptm}{m}{n}
\rput(0.83234376,0.0645312){$v_6$}
\usefont{T1}{ptm}{m}{n}
\rput(4.1723437,0.0845312){$v_8$}
\psdots[dotsize=0.12](2.48,0.09296875)
\usefont{T1}{ptm}{m}{n}
\rput(2.5923438,-0.07546875){$v_9$}
\psdots[dotsize=0.13](0.4195311,1.4345312)
\psline[linewidth=0.02cm,arrowsize=0.093cm 2.5,arrowlength=1.4,arrowinset=0.4]{->}(1.18,0.79296875)(0.49999994,1.3529687)
\psline[linewidth=0.02cm,arrowsize=0.093cm 2.5,arrowlength=1.4,arrowinset=0.4]{->}(1.1999999,0.09296875)(0.43999994,1.3329687)
\psline[linewidth=0.02cm,arrowsize=0.093cm 2.5,arrowlength=1.4,arrowinset=0.4]{->}(2.48,0.79296875)(0.49999994,1.4129688)
\psdots[dotsize=0.12](1.1999999,0.79296875)
\psdots[dotsize=0.12](3.78,0.79296875)
\psdots[dotsize=0.12](1.18,-0.6070312)
\psline[linewidth=0.02cm,arrowsize=0.093cm 2.5,arrowlength=1.4,arrowinset=0.4]{->}(2.46,-0.6070312)(1.2399999,0.6929687)
\psline[linewidth=0.02cm,arrowsize=0.093cm 2.5,arrowlength=1.4,arrowinset=0.4]{->}(1.1999999,0.07296875)(1.1999999,0.71296877)
\psbezier[linewidth=0.02,arrowsize=0.093cm 2.5,arrowlength=1.4,arrowinset=0.4]{->}(1.18,-0.58703125)(1.02,-0.22703125)(0.93999994,-0.00703125)(1.14,0.71296877)
\psbezier[linewidth=0.02,arrowsize=0.093cm 2.5,arrowlength=1.4,arrowinset=0.4]{->}(3.76,-0.64703125)(3.3567998,-1.1182812)(1.8199999,-1.1870313)(1.2399999,-0.66515625)
\psline[linewidth=0.02cm,arrowsize=0.093cm 2.5,arrowlength=1.4,arrowinset=0.4]{->}(2.46,-0.62703127)(1.2399999,-0.62703127)
\psline[linewidth=0.02cm,arrowsize=0.093cm 2.5,arrowlength=1.4,arrowinset=0.4]{->}(3.78,0.07296875)(1.28,-0.5470312)
\psbezier[linewidth=0.02,arrowsize=0.093cm 2.5,arrowlength=1.4,arrowinset=0.4]{->}(3.8,0.77296877)(3.96,0.41296875)(4.04,0.19296876)(3.84,-0.52703124)
\psdots[dotsize=0.13](3.7795312,-0.62546873)
\psline[linewidth=0.02cm,arrowsize=0.093cm 2.5,arrowlength=1.4,arrowinset=0.4]{->}(2.46,0.79296875)(3.72,-0.5470312)
\psline[linewidth=0.02cm,arrowsize=0.093cm 2.5,arrowlength=1.4,arrowinset=0.4]{->}(3.76,0.07296875)(3.76,-0.52703124)
\psline[linewidth=0.02cm,arrowsize=0.093cm 2.5,arrowlength=1.4,arrowinset=0.4]{->}(2.46,0.09296875)(3.72,0.73296875)
\psline[linewidth=0.02cm,arrowsize=0.093cm 2.5,arrowlength=1.4,arrowinset=0.4]{->}(2.48,0.77296877)(3.6799998,0.77296877)
\psline[linewidth=0.02cm,arrowsize=0.093cm 2.5,arrowlength=1.4,arrowinset=0.4]{->}(2.48,0.07296875)(2.48,0.71296877)
\psbezier[linewidth=0.02,arrowsize=0.093cm 2.5,arrowlength=1.4,arrowinset=0.4]{->}(2.48,-0.6070312)(2.32,-0.23595433)(2.24,-0.009185096)(2.44,0.73296875)
\usefont{T1}{ptm}{m}{n}
\rput(0.22234374,1.4645313){$a$}
\end{pspicture}
}
}
\caption{As $i < j$ whenever $(v_j,v_i) \in A(D)$, $D$ is acylic.
It can be checked that $C(D)=G \cup \{a\}$ for $G$
in Figure~\ref{motivation}.}
\label{digraph}
\end{figure}

For an illustration, consider the graph $G$ given in Figure~\ref{motivation}.
There are exactly three holes
$C_1:=v_5v_6v_7v_9v_5$, $C_2:=v_5v_9v_7v_8v_5$,
and $C_3:=v_5v_6v_7v_8v_5$ in $G$.
We let
$e_1=v_5v_6$, $e_2=v_6v_7$, $e_3=v_7v_9$, $e_4=v_5v_9$,
$e_5=v_7v_8$, $e_6=v_5v_8$, $e_7=v_1v_5$, $e_8=v_1v_6$,
$e_{9}=v_2v_6$, $e_{10}=v_2v_7$, $e_{11}=v_3v_7$, $e_{12}=v_3v_8$,
$e_{13}=v_4v_8$, $e_{14}=v_4v_5$.
Then
\begin{align*}
\chi_{C_1} &= (1,1,1,1,0,0,0,0,0,0,0,0,0,0); \\
\chi_{C_2} &= (0,0,1,1,1,1,0,0,0,0,0,0,0,0); \\
\chi_{C_3} &= (1,1,0,0,1,1,0,0,0,0,0,0,0,0).
\end{align*}
Since $\chi_{C_1}=\chi_{C_2}+\chi_{C_3}$ and $\chi_{C_2}$, $\chi_{C_3}$
are linearly independent, the hole dimension of $G$ is $2$.

Note that $\dim \cH(G) \leq |H(G)|=h(G)$.
Thus any graph $G$ satisfying $k(G) \leq \dim \cH(G)+1$ satisfies
 Kim's conjecture.
The competition number of the graph $G$ in Figure~\ref{motivation}
is $1$ and the digraph $D$ in Figure~\ref{digraph} is an acyclic digraph
whose competition graph is $G \cup \{a\}$.
Thus $k(G) \leq \dim \cH(G) +1$.
In $D$, $a$ covers edges $v_1v_5$, $v_1v_6$, $v_5v_6$
(`a vertex covers an edge' means that the vertex is a common out-neighbor
of the ends of the edge);
$v_1$ covers edges $v_2v_6$, $v_2v_7$, $v_6v_7$;
$v_2$ covers edges $v_3v_7$, $v_3v_8$, $v_7v_8$;
$v_3$ covers edges $v_4v_5$, $v_4v_8$, $v_5v_8$.
Thus the vertices $a$, $v_1$, $v_2$, $v_3$ cover
the edges of the cycle $C_3$.
Similarly we may check that the vertices $v_2$, $v_3$, $v_4$, $v_5$
cover the edges of the cycle $C_2$.
This observation tells us that the
assigned out-neighbors which cover the edges of $C_2$ and $C_3$
also cover the edges of the hole $C_1$.
It motivates us to introduce the notion of hole dimension
by a desire to find a sharp upper bound
for the competition number of a graph. 
As a matter of fact, 
we believe that any graph $G$ satisfies the following inequality: 
\[k(G) \leq \dim \cH(G)+1.\]

\section{Graphs satisfying the inequality $k(G) \leq \dim \cH(G)+1$}

In this section, we show that the inequality $k(G) \leq \dim \cH(G)+1$
holds for various families of graphs. In fact, the equality holds for
a nontrivial connected triangle-free graph:

\begin{Prop}\label{trianglefree}
If $G$ is a nontrivial connected triangle-free graph,
then $k(G)=\dim \cH(G)+1$.
\end{Prop}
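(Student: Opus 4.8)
The plan is to compute $\dim\cH(G)$ first and then prove the two inequalities $k(G)\ge\dim\cH(G)+1$ and $k(G)\le\dim\cH(G)+1$ separately. For the first step I would show that $\cH(G)=\cC(G)$ for \emph{every} triangle-free graph $G$: given a cycle $C$ of $G$, argue by induction on $|V(C)|$ that $\chi_C\in\cH(G)$; if $C$ is chordless then, since $G$ has no triangle, $|V(C)|\ge 4$ and $C$ is a hole, while if $C$ has a chord $e$ then $e$ splits $C$ into two strictly shorter cycles $C_1,C_2$ of $G$ with $\chi_C=\chi_{C_1}+\chi_{C_2}$, and the inductive hypothesis applies. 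Since every hole is a cycle, $\cH(G)\subseteq\cC(G)$ always, so equality follows; hence for connected $G$ the formula $\dim\cC(G)=|E(G)|-|V(G)|+1$ recalled in Section~2 gives $\dim\cH(G)=|E(G)|-|V(G)|+1$, and the statement to be proved becomes $k(G)=|E(G)|-|V(G)|+2$.

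For the lower bound, let $I_k$ be a set of $k$ isolated vertices and let $D$ be an acyclic digraph with $C(D)=G\cup I_k$; put $n=|V(G)|$, $m=|E(G)|$ and $N=n+k$, and fix a topological ordering $u_1,\dots,u_N$ of $V(D)$ in which every arc goes from a lower to a higher index. For each $\ell$, the in-neighborhood $N_D^-(u_\ell)$ (the set of in-neighbors of $u_\ell$) is a clique of $C(D)=G\cup I_k$; since $G$ is triangle-free and each vertex of $I_k$ lies in no edge of $C(D)$, this clique has at most two vertices, and when it has exactly two it is an edge of $G$. As every edge of $G$ must be covered, choosing for each edge $xy\in E(G)$ a vertex $u_\ell$ with $x,y\in N_D^-(u_\ell)$ (which forces $N_D^-(u_\ell)=\{x,y\}$) defines an injection from $E(G)$ into $\{u_3,\dots,u_N\}$, because $N_D^-(u_1)=\varnothing$ and $|N_D^-(u_2)|\le 1$. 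Hence $m\le N-2$, i.e. $k\ge m-n+2=\dim\cH(G)+1$.

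For the upper bound I would induct on $d:=\dim\cH(G)=m-n+1$. If $d=0$ then $G$ is a tree; as $G$ is nontrivial and connected it has an edge, so it is a connected chordal graph and $k(G)=1$ by Roberts' theorem~\cite{cn}, which equals $d+1$. If $d\ge 1$, choose an edge $e$ lying on a cycle of $G$; then $G-e$ is still connected (the cycle minus $e$ is a detour) and triangle-free, and $\dim\cH(G-e)=d-1$, so $k(G-e)\le d$ by the inductive hypothesis. It then suffices to observe that $k(G)\le k(G-e)+1$: from an acyclic digraph $D'$ with $C(D')=(G-e)\cup I_{k(G-e)}$ one builds an acyclic digraph $D$ by adjoining a new vertex $a$ together with the two arcs from the endpoints of $e$ to $a$; then $a$ is a sink (so $D$ is acyclic) and an isolated vertex of $C(D)$, and $C(D)=G\cup I_{k(G-e)+1}$. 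This gives $k(G)\le d+1$, and combined with the lower bound, $k(G)=\dim\cH(G)+1$.

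The two inductions (on cycle length and on $d$) are routine and present no real difficulty. The point requiring the most care is the lower bound bookkeeping: one must use triangle-freeness \emph{together with} the fact that the adjoined vertices are isolated in $C(D)$ to force a two-element in-neighborhood to be a single edge of $G$, and one must note that the first two vertices of the topological ordering can host no edge — which is exactly what produces the additive constant $2$.
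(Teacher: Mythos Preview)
Your argument is correct. Both you and the paper reduce the statement to the pair of identities $\dim\cH(G)=|E(G)|-|V(G)|+1$ and $k(G)=|E(G)|-|V(G)|+2$, and then conclude. The paper's proof, however, is a three-line appeal to known facts: it simply asserts that $\cH(G)=\cC(G)$ when $G$ is triangle-free, and then quotes the classical formula $k(G)=|E(G)|-|V(G)|+2$ for nontrivial connected triangle-free graphs (due to Roberts; see also Kim--Roberts~\cite{KF}) without reproving it. Your write-up instead supplies self-contained proofs of both facts: the chord-splitting induction for $\cH(G)=\cC(G)$ (essentially the triangle-free case of Lemma~\ref{cycle}), and a direct derivation of $k(G)=|E(G)|-|V(G)|+2$ via the in-neighbourhood counting for the lower bound and the edge-deletion induction for the upper bound. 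So the route is the same; what you add is a from-scratch proof of the cited competition-number formula, which makes your version longer but independent of the literature.
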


\begin{proof}
Since $G$ is connected, $\dim \cC(G)=|E(G)|-|V(G)|+1$.
Since $G$ is triangle-free, $\cH(G)=\cC(G)$
and so $\dim \cH(G)=|E(G)|-|V(G)|+1$.
Again, since $G$ is nontrivial, connected, and triangle-free,
$k(G)=|E(G)|-|V(G)|+2$.
Thus $k(G)=\dim \cH(G) +1$.
\end{proof}

Proposition~\ref{trianglefree} stands out in sharp contrast
to the fact that the competition number of a triangle-free graph
can be much larger than the number of its holes.
For example, the complete bipartite graph $K_{n,n}$ has
the competition number $n^2-2n+2$.
The number of holes of $K_{n,n}$ is
\[
\bi{n}{2}\bi{n}{2}=\frac{n^2(n-1)^2}{4}.
\]

Kim and Roberts~\cite{KF} showed that
if $G$ is connected and has exactly one triangle,
then $k(G)=|E(G)|-|V(G)|$ if $G$ has a hole
and $k(G)=|E(G)|-|V(G)|+1$ otherwise.
If $G$ has a hole, then $\dim \cH(G)=\dim \cC(G)$ or $\dim \cC(G)-1$.
Thus $k(G) < \dim \cH(G)+1$ if $G$ has a hole
and so the following proposition holds:

\begin{Prop}\label{onetriangle}
If a graph $G$ has exactly one triangle, then $k(G) \le \dim \cH(G)+1$.
The equality holds if and only if $G$ is a chordal graph without
isolated vertices.
\end{Prop}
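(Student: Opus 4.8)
The plan splits the argument according to isolated vertices, chordality, and connectivity, reducing everything else to results already quoted.

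\emph{Step 1 (isolated vertices).} I would first record the identity $k(G\cup K_1)=\max\{k(G)-1,\,0\}$: an acyclic digraph realizing $G\cup I_j$ also realizes $G\cup K_1\cup I_{j-1}$ (relabel one added vertex as the new $K_1$, or adjoin an isolated vertex to the digraph when $j=0$), and conversely every realization of $(G\cup K_1)\cup I_m$ realizes $G\cup I_{m+1}$, so $k(G)\le k(G\cup K_1)+1$. Iterating, if $G=G_0\cup I_j$ where $G_0$ has no isolated vertex, then $k(G)=\max\{k(G_0)-j,0\}\le k(G_0)$ while $\dim\cH(G)=\dim\cH(G_0)$. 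Thus it suffices to prove the inequality for $G_0$, and — once that is done — any isolated vertex in $G$ forces the inequality to be strict. From now on $G$ has no isolated vertex.

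\emph{Step 2 (chordal case).} If $G$ is chordal then $\dim\cH(G)=0$. We have $k(G)\le 1$ by Roberts' theorem that every chordal graph has competition number at most $1$, and $k(G)\ge 1$ because the competition graph of an acyclic digraph always has an isolated vertex (a sink of the digraph has no out-neighbor, hence is isolated in the competition graph), so a graph with no isolated vertex is not the competition graph of any acyclic digraph. Hence $k(G)=1=\dim\cH(G)+1$: the equality case. It remains to prove that a graph $G$ with exactly one triangle, no isolated vertex, and \emph{at least one hole} satisfies the strict inequality $k(G)\le\dim\cH(G)$.

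\emph{Step 3 (graphs with a hole).} Let $G$ have components $G_1,\dots,G_p$, with $G_1$ containing the unique triangle $T$; then $G_2,\dots,G_p$ are connected, nontrivial and triangle-free, so Proposition~\ref{trianglefree} gives $k(G_i)=\dim\cH(G_i)+1$ for $i\ge 2$. For $G_1$: connectedness gives $\dim\cC(G_1)=|E(G_1)|-|V(G_1)|+1$, and since the cycle space of a graph is spanned by its induced cycles we have $\cC(G_1)=\cH(G_1)+\langle\chi_T\rangle$, so $\dim\cH(G_1)\ge|E(G_1)|-|V(G_1)|$; with the Kim--Roberts formula this gives $k(G_1)=|E(G_1)|-|V(G_1)|\le\dim\cH(G_1)$ when $G_1$ has a hole, and $k(G_1)=1=\dim\cH(G_1)+1$ when $G_1$ is chordal. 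Thus every component satisfies $k(G_i)\le\dim\cH(G_i)+1$. Since a hole lies in a single component, $\cH(G)=\bigoplus_i\cH(G_i)$ and $\dim\cH(G)=\sum_i\dim\cH(G_i)$. So Step 3, and with it the whole proposition, reduces to the following disjoint-union principle: \emph{if every component $G_i$ of a graph with no isolated vertex satisfies $k(G_i)\le\dim\cH(G_i)+1$, then $k\bigl(\bigcup_iG_i\bigr)\le\dim\cH\bigl(\bigcup_iG_i\bigr)+1$, with strict inequality whenever some component contains a hole.}

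\emph{The main obstacle} is precisely this principle. The competition number is not additive under disjoint union — it can even drop, e.g. $k(K_3\cup C_4)=1<2=k(C_4)$, the vertices of $K_3$ acting as spare common preys for the edges of $C_4$ — so the principle has to be proved by an explicit construction: realize the components one at a time and \emph{chain} them, feeding each new component, as the preys it still needs, the vertices of the part built so far that currently have in-degree $0$ (equivalently, are not the designated common prey of any clique in the chosen edge clique cover). The real work is the bookkeeping — tracking how many fresh isolated vertices the chain actually costs, checking that this number stays at most $\dim\cH\bigl(\bigcup_iG_i\bigr)+1$ in general and strictly less once a hole is present (a hole-containing component contributes more to $\dim\cH$ than it ultimately costs), and choosing a good order of the components together with good vertex orderings inside each. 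Everything else in the proof is Roberts, Kim--Roberts, Proposition~\ref{trianglefree}, and $\mathbb{F}_2$-linear algebra.
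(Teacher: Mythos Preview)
Your Step~3 restricted to the connected case ($p=1$) is exactly the paper's argument: the paper's entire ``proof'' is the three-sentence paragraph preceding the proposition, which quotes Kim--Roberts for $k(G)$ and observes $\dim\cH(G)\ge\dim\cC(G)-1=|E(G)|-|V(G)|$ once $G$ has a hole. The paper does not treat disconnected graphs or isolated vertices at all; your Steps~1--2 and the component analysis go beyond what the paper writes.

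That said, your proposal is incomplete at exactly the point you flag. The disjoint-union principle you formulate (``if every component satisfies $k(G_i)\le\dim\cH(G_i)+1$ then so does the union, strictly once a hole appears'') is stated in a generality that is not known and may well be as hard as the paper's main conjecture; you only sketch a chaining strategy. For this proposition you do not need it in that generality. Every component other than $G_1$ is nontrivial, connected, and triangle-free, so it has a completely explicit realization: order its vertices, use each vertex except the last two as the prey for exactly one edge, and use $|E(G_i)|-|V(G_i)|+2$ fresh isolated vertices for the remaining edges. In such a realization precisely the last two vertices have in-degree~$0$, so chaining the triangle-free components one after another costs $\sum_{i\ge2}\bigl(|E(G_i)|-|V(G_i)|\bigr)$ fresh preys in total after the first, plus $k(G_1)$ for $G_1$; comparing this with $\dim\cH(G)=\dim\cH(G_1)+\sum_{i\ge2}(|E(G_i)|-|V(G_i)|+1)$ finishes both the inequality and the strictness. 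The bookkeeping is then a few lines, with the only care needed when some $k_i\le 2$ (trees). So the route is viable, but the general principle you isolate is overkill here, and as written the proof stops short of actually carrying out the chaining.
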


Two vectors representing two distinct holes of a graph
are linearly independent.
Thus, for a graph $G$ with at most two holes, $\dim \cH(G)=h(G)$.
Since a graph $G$ with at most two holes satisfies the inequality
$k(G) \leq h(G)+1$, the following proposition holds:

\begin{Prop}\label{atmosttwoholes}
If a graph $G$ has at most two holes, then $k(G) \le \dim \cH(G)+1$.
Especially, the equality holds if $G$ is a chordal graph (has no hole)
without isolated vertices.
\end{Prop}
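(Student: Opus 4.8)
The plan is to deduce the proposition from the bound $k(G)\le h(G)+1$ that is already known for graphs with at most two holes, once we check that for such graphs the hole dimension $\dim\cH(G)$ coincides with the number of holes $h(G)$.

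First I would prove the linear independence statement: if $C$ and $C'$ are two distinct holes of $G$, then $\chi_C$ and $\chi_{C'}$ are linearly independent over $\mathbb{F}_2$. Since $C\ne C'$ are induced cycles, their edge sets differ, so $\chi_C\ne\chi_{C'}$; as both are nonzero and the scalar field is $\mathbb{F}_2$, the combinations of $\{\chi_C,\chi_{C'}\}$ that must be checked are $\chi_C$, $\chi_{C'}$, and $\chi_C+\chi_{C'}$, none of which is the zero vector. Hence whenever $|H(G)|\le 2$ the set $\{\chi_C\mid C\in H(G)\}$ is linearly independent, so $\dim\cH(G)=h(G)$ for every graph $G$ with $h(G)\le 2$; in particular $\dim\cH(G)=0$ when $G$ is chordal.

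Next I would invoke the three facts recalled in the introduction: $k(G)\le h(G)+1$ holds when $h(G)=0$ (Roberts~\cite{cn}, chordal graphs), when $h(G)=1$ (Cho and Kim~\cite{ck}), and when $h(G)=2$ (the family $\cG_1$, by Lee, Kim, Kim, and Sano~\cite{twoholes} and, independently, Li and Chang~\cite{LC}). Combining these with the identity $\dim\cH(G)=h(G)$ just obtained gives $k(G)\le h(G)+1=\dim\cH(G)+1$ for every graph with at most two holes, which is the first assertion. For the equality clause, let $G$ be a chordal graph without isolated vertices; then $h(G)=0$, hence $\dim\cH(G)=0$, and it remains to show $k(G)=1$. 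On the one hand, Roberts' theorem gives $k(G)\le 1$. On the other hand, $G$ has an edge, while the competition graph of any acyclic digraph has an isolated vertex (acyclicity forces the existence of a vertex of out-degree zero, and such a vertex covers no edge and is therefore isolated in the competition graph), so $G$ is not itself the competition graph of an acyclic digraph and $k(G)\ge 1$. Therefore $k(G)=1=\dim\cH(G)+1$.

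I do not expect a genuine obstacle here, since the statement is essentially a bookkeeping corollary of results quoted earlier; the one point to stress is that the linear independence step is precisely where the hypothesis $h(G)\le 2$ enters. For three or more holes the characteristic vectors need not be independent, as the graph in Figure~\ref{motivation} illustrates, where three holes span a space of dimension only $2$; so this argument establishes $k(G)\le\dim\cH(G)+1$ exactly in the range $h(G)\le 2$ and yields nothing beyond it.
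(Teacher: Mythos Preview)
Your proposal is correct and follows essentially the same route as the paper: the paper observes (in the paragraph preceding the proposition) that two distinct holes give linearly independent characteristic vectors, so $\dim\cH(G)=h(G)$ whenever $h(G)\le 2$, and then invokes the known bound $k(G)\le h(G)+1$ for such graphs. You have simply supplied more detail, including an explicit argument for the equality clause (which the paper leaves implicit, relying on Roberts' well-known result that a chordal graph without isolated vertices has competition number exactly~$1$).
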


Furthermore, vectors representing holes any pair of which are
mutually edge-disjoint are linearly independent.
Thus, the hole dimension of any graph in the family $\cG_3$ equals
the number of its holes and so the following proposition holds:

\begin{Prop}\label{mutuallyindependent}
If $G$ is a graph such that any two distinct holes of $G$
are mutually edge-disjoint, then $k(G) \leq \dim \cH(G)+1$.
\end{Prop}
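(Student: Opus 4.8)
The plan is to deduce the inequality from the bound $k(G)\le h(G)+1$ for the family $\cG_3$ established by Kim, Lee, and Sano~\cite{KLS}, by checking that for a graph whose holes are pairwise edge-disjoint one has $\dim\cH(G)=h(G)$. Granting this, the proposition is immediate, since then $k(G)\le h(G)+1=\dim\cH(G)+1$. So the whole content of the argument is the identity $\dim\cH(G)=h(G)$, which was already alluded to in the excerpt.

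To prove $\dim\cH(G)=h(G)$, recall that $\cH(G)=\mbox{Span}\{\chi_C\mid C\in H(G)\}$ by definition, so it suffices to show that the spanning set $\{\chi_C\mid C\in H(G)\}$ is linearly independent in $\mathbb{F}_2^{E(G)}$. I would argue by contradiction. If it were dependent, there would be a nonempty subset $S\subseteq H(G)$ with $\sum_{C\in S}\chi_C=0$. Pick any hole $C_0\in S$ and any edge $e\in E(C_0)$. By the hypothesis that any two distinct holes of $G$ are mutually edge-disjoint, $e\notin E(C)$ for every $C\in S\setminus\{C_0\}$; hence the coordinate of $\sum_{C\in S}\chi_C$ at $e$ equals $\chi_{C_0}(e)=1\neq 0$ in $\mathbb{F}_2$, contradicting $\sum_{C\in S}\chi_C=0$. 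Therefore $\{\chi_C\mid C\in H(G)\}$ is a linearly independent spanning set of $\cH(G)$, so it is a basis and $\dim\cH(G)=|H(G)|=h(G)$.

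There is essentially no obstacle here: once the linear-independence observation above is recorded, the proposition follows in one line from~\cite{KLS}. The only point worth stressing is that it is edge-disjointness, rather than mere vertex-disjointness or the weaker notion of ``independence'' of holes, that makes the coordinate-projection argument work — one needs, for each chosen hole, an edge of it that is missed by every other hole — and this is also precisely why the argument does not extend to graphs whose holes may share edges.
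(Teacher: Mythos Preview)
Your argument is correct and matches the paper's own reasoning exactly: the paper likewise observes that edge-disjoint holes yield linearly independent vectors $\chi_C$, so $\dim\cH(G)=h(G)$, and then invokes the bound $k(G)\le h(G)+1$ from~\cite{KLS} for graphs in $\cG_3$. Your contradiction argument for linear independence is a clean way to spell out the one-line claim the paper leaves implicit.
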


\begin{Rem}
The hole dimension of a graph can be arbitrarily larger than
its competition number.
For example, the complete multipartite graph with $m$ parts ($m\ge2$)
each of which has size $2$ has competition number $2$ (see \cite{KPS})
while $\dim \cH(G)=h(G)=\bi{m}{2}$.
\end{Rem}

In the following, we present another large class of graphs
satisfying the inequality $k(G) \leq \dim \cH(G)+1$.

\begin{Lem}\label{cycle}
Let $C$ be a cycle of a graph $G$. Then there exist cycles
$C_1, \ldots, C_t$ such that, for each $i=1, \ldots, t$,
$C_i$ is a triangle or a hole,
$V(C_i) \subseteq V(C)$, and $\chi_C=\sum_{i=1}^t \chi_{C_i}$.
\end{Lem}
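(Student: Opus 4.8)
The plan is to argue by strong induction on the length $\ell(C)$ of the cycle $C$. The base of the induction handles two cases at once: if $\ell(C)=3$ then $C$ is a triangle, while if $\ell(C)\ge 4$ and $C$ is an induced subgraph of $G$ then $C$ is a hole. In either case one takes $t=1$ and $C_1=C$, and there is nothing more to prove.

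For the inductive step, suppose $\ell(C)\ge 4$ and $C$ is not induced in $G$. Then I would choose an edge $e=uv$ of $G$ with $u,v\in V(C)$ and $e\notin E(C)$; such an $e$ exists precisely because $G[V(C)]$ has an edge not on $C$, and its endpoints $u,v$ are non-consecutive on $C$. Let $P$ and $Q$ be the two $u$--$v$ paths lying on $C$, so that $\ell(P),\ell(Q)\ge 2$ and $\ell(P)+\ell(Q)=\ell(C)$. Let $C'$ be the cycle obtained from $P$ by adding the edge $e$, and $C''$ the cycle obtained from $Q$ by adding $e$. Then $V(C'),V(C'')\subseteq V(C)$, and each of $C',C''$ has length at least $3$ and strictly less than $\ell(C)$. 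The crucial observation is that, working over $\mathbb{F}_2$, one has $\chi_{C'}+\chi_{C''}=\chi_C$: the chord $e$ lies in both $E(C')$ and $E(C'')$ and therefore cancels, whereas every edge of $C$ belongs to exactly one of $E(C')$, $E(C'')$. Applying the induction hypothesis to $C'$ and to $C''$ produces triangles or holes whose vertex sets are contained in $V(C')$ and in $V(C'')$, hence in $V(C)$, and whose characteristic vectors sum to $\chi_{C'}$ and to $\chi_{C''}$ respectively; concatenating these two lists and using $\chi_C=\chi_{C'}+\chi_{C''}$ completes the induction.

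I do not expect a genuine obstacle; the statement is essentially a careful repackaging of the standard fact that triangles and holes generate the cycle space, the only new feature being the vertex-containment refinement. The two points that deserve a little care are: (i) checking that $C'$ and $C''$ really are cycles of length at least $3$ that are strictly shorter than $C$, which is exactly where one uses that $e$ is a genuine chord, i.e.\ has non-adjacent endpoints on $C$; and (ii) propagating the requirement $V(C_i)\subseteq V(C)$ through the recursion, which is automatic since every auxiliary cycle introduced is built solely from vertices of $C$.
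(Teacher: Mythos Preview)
Your proof is correct and follows essentially the same approach as the paper: induction on the length of $C$, splitting via a chord into two shorter cycles whose characteristic vectors sum to $\chi_C$ over $\mathbb{F}_2$, with the vertex-containment condition propagating automatically. The only cosmetic difference is that the paper selects a chord whose shorter $(u,v)$-section has minimum length, so that one of the two pieces is already a triangle or hole and induction is invoked only on the other piece; your version picks an arbitrary chord and recurses on both pieces, which is equally valid.
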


\begin{proof}
We shall prove the lemma by induction on the length of a cycle.
If $C$ has length $3$, then $C$ is a triangle
and the lemma immediately follows.
Now suppose that $C$ has length at least $4$.
If $C$ does not contain a chord,
then $C$ itself is a hole and the lemma follows.
Suppose that $C$ has a chord.
Then we may take a chord $uv$ such that the length of
one of the $(u,v)$-sections of $C$ is the minimum
among the lengths of sections of $C$ determined by each of its chords.
Then the shorter $(u,v)$-section of $C$ together with $uv$ form
a triangle or a hole of $G$.
We denote it by $C_1$. Then obviously $V(C_1) \subseteq V(C)$.
The longer $(u,v)$-section of $C$ together with $uv$ form a cycle $C'$
with the length smaller than that of $C$.
By the induction hypothesis, there exist cycles $C_2$, \ldots, $C_t$
such that, for each $i=2, \ldots, t$,
$C_i$ is a triangle or hole, $V(C_i) \subseteq V(C')$,
and  $\chi_{C'}=\sum_{i=2}^t \chi_{C_i}$.
Now
\[
\chi_C = \chi_{C_1} + \chi_{C'} = \sum_{i=1}^t \chi_{C_i}.
\]
Since $V(C') \subseteq V(C)$,
$V(C_i) \subseteq V(C)$ for each $i=2, \ldots, t$.
Thus the lemma holds.
\end{proof}

\begin{Thm}\label{newfamily}
Let $G$ be a connected graph which has a connected spanning subgraph $G'$
satisfying the following properties:
\begin{itemize}
\item[{\rm (a)}]
$G'$ contains all the triangles of $G$;
\item[{\rm (b)}]
$k(G')=1$.
\end{itemize}
Then $k(G) \le \dim \cH(G)+1$.
\end{Thm}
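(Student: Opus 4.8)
The plan is to deduce the bound for $G$ from the hypothesis $k(G')=1$ by restoring, one edge at a time, the edges of $G$ that are missing from $G'$ — each at the cost of a single additional isolated vertex — and then to invoke Lemma~\ref{cycle} to show that there can be at most $\dim\cH(G)$ such edges.

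\textbf{Step 1: an edge-addition bound.} First I would establish the unconditional inequality $k(H+e)\le k(H)+1$ for an arbitrary graph $H$ and an arbitrary non-edge $e=xy$ of $H$. Starting from an acyclic digraph $D'$ whose competition graph is $H$ together with $k(H)$ isolated vertices, build $D$ by adjoining one new vertex $z$, placed last in an acyclic ordering of $D'$, together with the two arcs $(x,z)$ and $(y,z)$. Then $D$ is still acyclic (only a sink was added), the out-neighbourhood of every old vertex is unchanged except that $z$ is adjoined to those of $x$ and of $y$, and $z$ has out-degree $0$; hence the only new adjacency produced in $C(D)$ is the edge $xy$, and $z$ is an isolated vertex of $C(D)$. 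Thus $C(D)$ is $H+e$ together with $k(H)+1$ isolated vertices, so $k(H+e)\le k(H)+1$.

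\textbf{Step 2: iteration and the dimension count.} Put $F:=E(G)\setminus E(G')$. Since $G'$ is a spanning subgraph, $G$ is obtained from $G'$ by adding the $|F|$ edges of $F$ one after another, so Step~1 gives $k(G)\le k(G')+|F|=1+|F|$; it remains to show $|F|\le\dim\cH(G)$. Because $G$ and $G'$ are connected with $V(G')=V(G)$, we have $\dim\cC(G)=|E(G)|-|V(G)|+1$ and $\dim\cC(G')=|E(G')|-|V(G)|+1$, whence $|F|=\dim\cC(G)-\dim\cC(G')$. Let $\cT(G)$ be the span of $\{\chi_T\mid T\text{ is a triangle of }G\}$ in $\mathbb{F}_2^{E(G)}$, and define $\cT(G')\subseteq\mathbb{F}_2^{E(G')}$ analogously. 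By Lemma~\ref{cycle} every cycle of $G$ is a sum of triangle- and hole-vectors, so $\cC(G)=\cT(G)+\cH(G)$, and therefore $\dim\cC(G)\le\dim\cT(G)+\dim\cH(G)$. By property (a), $G$ and $G'$ have exactly the same triangles, so the coordinate inclusion $\mathbb{F}_2^{E(G')}\hookrightarrow\mathbb{F}_2^{E(G)}$ carries $\cT(G')$ isomorphically onto $\cT(G)$; since $\cT(G')\subseteq\cC(G')$ this gives $\dim\cC(G')\ge\dim\cT(G')=\dim\cT(G)$. Combining these inequalities,
\[
|F|=\dim\cC(G)-\dim\cC(G')\le\bigl(\dim\cT(G)+\dim\cH(G)\bigr)-\dim\cT(G)=\dim\cH(G),
\]
and hence $k(G)\le 1+|F|\le\dim\cH(G)+1$.

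\textbf{Anticipated obstacle.} The only step needing genuine verification is Step~1: one must check carefully that appending the sink $z$ with arcs from the two ends of $e$ adds precisely the edge $e$ to the competition graph, removes no existing edge, and keeps every isolated vertex (including $z$) isolated. After that the argument is just linear algebra over $\mathbb{F}_2$, with Lemma~\ref{cycle} doing the essential work by supplying the decomposition $\cC(G)=\cT(G)+\cH(G)$. (The degenerate case $F=\emptyset$ should also be noted: there the claim reduces to $k(G)=k(G')=1\le\dim\cH(G)+1$.)
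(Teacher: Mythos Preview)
Your proof is correct. The first half---bounding $k(G)\le 1+|F|$ by adjoining a sink for each missing edge---is exactly what the paper does (the paper does it in one shot rather than iteratively, but the construction is identical). The genuine difference is in how you establish $|F|\le\dim\cH(G)$.

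The paper argues constructively: for each $e\in F$ it picks a shortest cycle $C_e$ through $e$ in $G'+e$, notes that $C_e$ must be a hole of $G'+e$ by property~(a), and then uses Lemma~\ref{cycle} (together with a small argument ruling out triangles in the decomposition, again via property~(a)) to place $\chi_{C_e}$ inside $\cH(G)$. Since each $C_e$ contains $e$ and no other edge of $F$, the resulting $|F|$ vectors are linearly independent in $\cH(G)$.

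Your route is a clean dimension count: Lemma~\ref{cycle} gives $\cC(G)=\cT(G)+\cH(G)$, property~(a) gives $\cT(G)\cong\cT(G')\subseteq\cC(G')$, and subtracting the cycle-space dimensions yields $|F|\le\dim\cH(G)$ directly. This avoids choosing the cycles $C_e$ and the somewhat delicate verification that their Lemma~\ref{cycle} decompositions involve no triangles. On the other hand, the paper's argument produces an explicit linearly independent family $\{\chi_{C_e}:e\in F\}\subseteq\cH(G)$ of ``fundamental'' hole vectors indexed by the missing edges, which is extra structural information your abstract count does not supply.
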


\begin{proof}
Let $E^*:=E(G) \setminus E(G')$.
Take an edge $e \in E^*$.
Since $G'$ is connected, $G' +e$ contains a cycle containing the edge $e$.
Take a shortest cycle $C_e$ of $G'+e$ containing $e$.
From the choice, $C_e$ is a triangle or a hole in $G'+e$.
By the property (a) and the fact that $e\not\in E(G')$,
$C_e$ must be a hole in $G'+e$.
Since $C_e$ is a cycle in $G$, by Lemma~\ref{cycle},
\[
\chi_{C_e} = \sum_{C \in H^*} \chi_C + \sum_{C \in T^*} \chi_C
\]
where $H^*$ and $T^*$ are sets of holes and triangles of $G$, respectively.
We will claim that $T^*=\emptyset$ by contradiction.
Suppose that $T^* \neq \emptyset$. Let $C$ be a triangle in $T^*$.
Then one, say $e^*$, of the three edges of $C$ is a chord of $C_e$ in $G$.
By the property (a),
\[
e^* \in E(C) \subseteq E(G'),
\]
which contradicts the fact that $C_e$ is a hole in $G'+e$.
Thus $\chi_{C_e}=\sum_{C \in H^*}\chi_C$ and so $\chi_{C_e} \in \cH(G)$.
Therefore, for each edge $e\in E^*$, there exists a hole $C_e$ of $G'+e$
such that $C_e\in \cH(G)$.

Since $e' \not\in E(C_e)$ for any $e' \in E^* \setminus\{e\}$
for each $e \in E^*$, $\{\chi_{C_e} \mid e \in E^*\}$
is a linearly independent set and so $|E^*| \leq \dim \cH(G)$.
By the property (b), $k(G')=1$ and so there is an acyclic digraph $D'$
such that $C(D')=G' \cup \{i\}$
where $i$ is a new isolated vertex added to $G'$.
Now we define a digraph $D$ by
\begin{align*}
V(D) & :=V(D') \cup \{i_e \mid e \in E^*\}; \\
A(D) & :=A(D') \cup \bigcup_{e=x_ey_e  \in E^*}
\{ (x_e,i_e), (y_e,i_e) \}.
\end{align*}
It is easy to check that $D$ is acyclic
and $C(D)=G \cup \{i\} \cup \{i_e \mid e \in E^*\}$.
Hence $k(G) \le |E^*|+1 \le \dim \cH(G)+1$.
\end{proof}

Using Theorem~\ref{newfamily},
we will show that a connected graph $G$
with at most three triangles satisfies the inequality
$k(G) \le \dim \cH(G)+1$.
We need the following two lemmas.

\begin{Lem}\label{lem;forest}
Let $G$ be a connected graph.
For a forest $F$ of $G$, there exists a spanning tree $T$ of $G$
such that $E(F)\subseteq E(T)$.
\end{Lem}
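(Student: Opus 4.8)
The plan is to build $T$ by enlarging $F$ one edge at a time, each new edge joining two distinct components of the current forest; the connectivity of $G$ guarantees that such an edge is available until the forest becomes spanning and connected. First I would replace $F$ by the spanning subgraph $F_0$ of $G$ with $V(F_0)=V(G)$ and $E(F_0)=E(F)$. Since $F$ is a forest, $F_0$ is an acyclic spanning subgraph of $G$; let $T_1,\dots,T_r$ be its connected components, each a tree (a trivial one for any vertex of $G$ missed by $F$). I would then induct on $r$. If $r=1$, then $F_0$ is itself a spanning tree of $G$ with $E(F)\subseteq E(F_0)$, and we take $T:=F_0$.

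Now suppose $r\ge 2$. Since $G$ is connected and $V(G)=V(T_1)\cup\dots\cup V(T_r)$, there must be an edge $e=xy\in E(G)$ with $x\in V(T_p)$ and $y\in V(T_q)$ for some $p\ne q$ — otherwise no edge of $G$ would leave $V(T_1)$, contradicting connectivity. Adding $e$ to $F_0$ creates no cycle: any cycle $C$ of $F_0+e$ through $e$ would give, via $C-e$, an $x$--$y$ path inside $F_0$, forcing $T_p=T_q$. Hence $F_0+e$ is again an acyclic spanning subgraph of $G$, now with exactly $r-1$ components, and still $E(F)\subseteq E(F_0+e)$. Applying the induction hypothesis to $F_0+e$ yields a spanning tree $T$ of $G$ with $E(F)\subseteq E(F_0+e)\subseteq E(T)$, completing the induction. (Equivalently, one could invoke the fact that forests of $G$ are exactly the independent sets of the graphic matroid of $G$ and that every independent set extends to a basis, i.e.\ a spanning tree; but the elementary argument above is self-contained.)

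There is no serious obstacle here. The only two points needing a word of justification are that the connectivity of $G$ always supplies a ``bridging'' edge between two components of the intermediate forest, and that adding such an edge preserves acyclicity; both are immediate from the component structure of forests. The reduction to a spanning subgraph at the start shows that the case in which $F$ does not span $G$ requires no separate treatment, since padding $E(F)$ out to $F_0$ affects neither the hypothesis nor the conclusion.
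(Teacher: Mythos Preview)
Your argument is correct. It differs from the paper's proof in the direction of the induction: the paper inducts on $|E(G)|$, repeatedly deleting from $G$ an edge $e$ that lies on some cycle but not in $F$ (such an $e$ exists unless $G$ is already a tree, and $G-e$ stays connected), and then invokes the hypothesis on the smaller graph $G-e$ with the same forest $F$. You instead fix $G$ and induct on the number of components of the spanning forest, growing $F_0$ toward a spanning tree by adding bridging edges. Both are elementary and of comparable length; your ``grow the forest'' approach is the standard matroid-style extension argument and perhaps slightly more transparent, while the paper's ``shrink the graph'' approach has the minor convenience that the forest never changes during the induction. Neither buys anything the other does not.
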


\begin{proof}
We show by induction on the number of edges of a graph.
For a connected graph $G$ with $|E(G)|\le 2$,
the lemma obviously holds.
Suppose that for any connected graph with $m$ $(m\ge 1)$ edges,
the lemma is true.
Take a connected graph $G$ with $m+1$ edges.
Let $F$ be a forest of $G$.
If $G$ is a tree, then $G$ is a spanning tree satisfying 
$E(F) \subseteq E(G)$.
Suppose that $G$ is not a tree.
Then there exists a cycle $C$ of $G$.
Since $F$ does not contain a cycle,
there exists an edge $e\in E(C)$ such that $e\not\in E(F)$
and $G-e$ is connected.
Then $F$ is a forest of $G-e$ and by the induction hypothesis,
there exists a spanning tree $T$ of $G-e$ such that $E(F) \subseteq E(T)$.
Since $G-e$ is a connected spanning subgraph of $G$,
$T$ is also a spanning tree of $G$.
\end{proof}

\begin{Lem}\label{triangles}
A connected graph with at most three triangles has
a connected spanning subgraph satisfying the properties given
in Theorem~\ref{newfamily}.
\end{Lem}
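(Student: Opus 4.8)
\medskip
\noindent\textbf{Proof proposal.}
The plan is to exhibit a connected spanning subgraph $G'$ of $G$ that is \emph{chordal} and contains every triangle of $G$. Once such a $G'$ is in hand, conditions (a) and (b) of Theorem~\ref{newfamily} are automatic: (a) holds by construction, and (b) holds because a connected chordal graph with at least one edge has competition number $1$ (Roberts~\cite{cn}; see also Proposition~\ref{atmosttwoholes}). (We tacitly assume $G$ has an edge; the only exception $G\cong K_1$ does not affect the subsequent use of this lemma, since $k(K_1)\le\dim\cH(K_1)+1$ holds trivially.)

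First I would let $B$ be the subgraph of $G$ consisting of all vertices and all edges lying on some triangle of $G$, and prove that $B$ is chordal --- this is the only place where the hypothesis ``at most three triangles'' is used. Suppose, for contradiction, that $B$ had a hole $C$ of length $\ell\ge4$. Each edge of $C$ lies on some triangle of $G$, so fix one such triangle $T_e$ for every edge $e$ of $C$. The assignment $e\mapsto T_e$ is injective: if a triangle $T$ contained two edges $xy$ and $yz$ of $C$, then $x$ and $z$ would be the two $C$-neighbours of $y$, hence nonadjacent on $C$ (as $\ell\ge4$), yet $xz$ is an edge of $T$ and therefore an edge of $B$ --- a chord of $C$ in $B$, contradicting that $C$ is induced. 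Thus the $\ell\ge4$ edges of $C$ give rise to $\ell\ge4$ pairwise distinct triangles of $G$, a contradiction. Hence $B$ is chordal.

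Next I would enlarge $B$ to a connected spanning subgraph without creating a hole. Pick a spanning forest $F_B$ of $B$ (a spanning tree in each component of $B$); then $F_B$ is a forest of $G$, so by Lemma~\ref{lem;forest} it extends to a spanning tree $T$ of $G$, and I set $G':=B\cup\bigl(E(T)\setminus E(F_B)\bigr)$, which is clearly connected and spanning. Collapsing each component of $B$ to a single point turns $T$ into a spanning tree whose edge set is exactly $E(T)\setminus E(F_B)$, and the same collapse applied to $G'$ deletes only edges of $B$; consequently every edge of $E(T)\setminus E(F_B)$ is a bridge of $G'$. Therefore every cycle of $G'$ uses only edges of $B$ and so lies inside one component of $B$, which forces any hole of $G'$ to be a hole of $B$ --- impossible by the previous paragraph. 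Hence $G'$ is a connected chordal spanning subgraph of $G$ containing all triangles of $G$, with $k(G')=1$. (When $G$ is triangle-free, $B$ is empty and $G'$ is simply a spanning tree of $G$.)

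The step I expect to need the most care when written out is the last one --- checking that adjoining the connecting edges to $B$ does not link two components of $B$ into a new short induced cycle or complete some path of $B$ into a hole; the bridge observation is exactly what disposes of this cleanly, since an edge lying on no cycle of $G'$ cannot belong to an induced cycle of $G'$, so the induced cycles of $G'$ coincide with those of $B$. The chordality of $B$ itself, though it is the place where the numerical hypothesis is essential, follows quickly once one records the single fact that a triangle and a hole share at most one edge, combined with the trivial inequality that a cycle of length $\ge4$ needs $\ge4$ triangles to cover its edges.
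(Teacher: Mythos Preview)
Your argument is correct. The first half---showing that the subgraph $B$ spanned by the triangle edges is chordal---is exactly what the paper does. The second half diverges: the paper observes that each triangle $T$ has a ``private'' edge $e_T$ lying in no other triangle, deletes these $e_T$'s from $B$ to obtain a forest $F$, extends $F$ to a spanning tree $Q$ via Lemma~\ref{lem;forest}, and sets $G'=Q\cup\{e_T\}$; chordality of $G'$ is then argued by taking a hypothetical hole, replacing each $e_T$ on it by the other two edges of $T$, and landing in a cycle of the tree $Q$. Your construction instead takes an arbitrary spanning forest $F_B$ of $B$, extends it to a spanning tree $T$, and sets $G'=B\cup(E(T)\setminus E(F_B))$; the contraction/bridge observation then shows every cycle of $G'$ lives in $B$. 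Your route avoids singling out the private edges $e_T$ and gives a cleaner reason why no new induced cycles appear (bridges cannot lie on cycles), at the cost of a slightly larger $G'$; the paper's route is more hands-on but makes explicit exactly which few edges beyond a spanning tree are needed. Both rely on Lemma~\ref{lem;forest} in the same way, and both use the ``$\le 3$ triangles'' hypothesis only for the chordality of $B$.
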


\begin{proof}
Let $G$ be a connected graph with at most three triangles.
If $G$ is triangle-free, then any spanning tree satisfies
the properties given in Theorem~\ref{newfamily}.
Suppose that $G$ has at least one triangle.
Let $H$ be the subgraph of $G$ induced by the edges of triangles of $G$.
Then $H$ is chordal.
For, otherwise, there is a hole $C$ of length at least four.
Then each edge of $C$ belongs to a triangle
and any two distinct edges of $C$ cannot belong to the same triangle.
This contradicts the hypothesis that there are at most three triangles.
On the other hand, for each triangle $T$,
since there are at most two triangles other than $T$ in $G$,
there exists an edge $e_T$ not belonging to any other triangle.
Now we delete $e_T$ from $H$ for each triangle $T$ of $G$.
Then the resulting graph $F$ is a forest since $e_T$ belongs to
a unique induced cycle $T$ in $H$ for each triangle $T$.

By Lemma~\ref{lem;forest}, there exists a spanning tree $Q$ of $G$
such that $E(F) \subseteq E(Q)$.
Let $G'$ be the graph obtained by adding the edge $e_T$ to $Q$
for each triangle $T$.
Obviously $G'$ is a connected spanning subgraph of $G$ containing
all the triangles of $G$.
To show that $G'$ is chordal by contradiction,
suppose that $G'$ contains a hole $C'$.
Then at least one edge of $C'$ is $e_T$ for some triangle $T$.
Replacing $e_T$ with the other two edges of $T$ for each $e_T$ on $C'$
results in a subgraph of $Q$ since $F$ contains the two edges of $T$
other than $e_T$ for each triangle $T$.
However, each vertex of this subgraph has degree at least $2$
in this subgraph and so contains a cycle in $Q$.
This contradicts the fact that $Q$ is a tree.
Hence $k(G')=1$.
\end{proof}

By Theorem~\ref{newfamily} and Lemma~\ref{triangles},
the following theorem holds:

\begin{Thm}
If a connected graph $G$ has at most three triangles, 
then $k(G) \le \dim \cH(G)+1$.
\end{Thm}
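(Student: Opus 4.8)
\textbf{The plan} is simply to combine the two results just obtained. Given a connected graph $G$ with at most three triangles, Lemma~\ref{triangles} supplies a connected spanning subgraph $G'$ of $G$ that contains every triangle of $G$ and satisfies $k(G')=1$; these are exactly hypotheses (a) and (b) of Theorem~\ref{newfamily}, so applying that theorem to the pair $(G,G')$ gives $k(G)\le\dim\cH(G)+1$ at once. Since this reduction is immediate, let me instead indicate how I would assemble the two ingredients it rests on.

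For Lemma~\ref{triangles}: if $G$ is triangle-free, any spanning tree works, so assume $G$ has a triangle and let $H$ be the subgraph induced by the edges lying on triangles of $G$. A hole of length $\ell\ge4$ in $H$ would force its $\ell$ edges onto $\ell$ distinct triangles (adjacent edges in a common triangle would create a chord, non-adjacent edges cannot share a triangle), contradicting the bound of three triangles; hence $H$ is chordal. For each triangle $T$ the remaining $\le2$ triangles block at most two of its three edges, so $T$ has an edge $e_T$ on no other triangle, and deleting all the $e_T$ from $H$ leaves a forest $F$, since each $e_T$ lies on the unique induced cycle $T$ of $H$. I would then extend $F$ to a spanning tree $Q$ of $G$ by Lemma~\ref{lem;forest} and set $G':=Q+\{e_T\}$. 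Clearly $G'$ is a connected spanning subgraph containing all the triangles of $G$; the point requiring care is that $G'$ stays chordal (so that $k(G')=1$), and I regard this as the main obstacle: one checks that replacing each $e_T$ on a hypothetical hole $C'$ of $G'$ by the other two edges of $T$ (both of which survive in $F\subseteq Q$) yields a subgraph of $Q$ in which every vertex has degree $\ge2$, forcing a cycle in the tree $Q$ --- a contradiction.

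For Theorem~\ref{newfamily}: with $G'$ as above and $E^*:=E(G)\setminus E(G')$, for each $e\in E^*$ take a shortest cycle $C_e$ of $G'+e$ through $e$; being shortest it is chordless, hence a triangle or a hole, and since $e\notin E(G')$ property (a) rules out the triangle, so $C_e$ is a hole of $G'+e$. By Lemma~\ref{cycle}, $\chi_{C_e}$ is a sum of characteristic vectors of triangles and holes of $G$ with vertex sets contained in $V(C_e)$; a triangle occurring there would contribute a chord of $C_e$ lying on a triangle of $G$, hence in $E(G')$ by (a), contradicting that $C_e$ is a hole of $G'+e$. Thus $\chi_{C_e}\in\cH(G)$, and since $e\in E(C_e)$ while no other edge of $E^*$ meets $C_e$, the set $\{\chi_{C_e}\mid e\in E^*\}$ is linearly independent, so $|E^*|\le\dim\cH(G)$. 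Finally, starting from an acyclic digraph realizing $G'$ together with one isolated vertex (which exists as $k(G')=1$) and adjoining, for each $e=x_ey_e\in E^*$, a new vertex $i_e$ that is a common out-neighbour of $x_e$ and $y_e$, one obtains an acyclic digraph whose competition graph is $G$ together with $|E^*|+1$ isolated vertices; hence $k(G)\le|E^*|+1\le\dim\cH(G)+1$. The residual work --- the $\mathbb{F}_2$-linear-algebra bookkeeping and the acyclicity check for the augmented digraph --- is routine.
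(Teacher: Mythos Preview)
Your proof is correct and follows exactly the paper's approach: the theorem is deduced immediately from Theorem~\ref{newfamily} applied with the spanning subgraph furnished by Lemma~\ref{triangles}. Your additional sketches of those two ingredients also mirror the paper's own arguments step for step, including the chordality proofs for $H$ and for $G'$ and the linear-independence and digraph-construction pieces of Theorem~\ref{newfamily}.
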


\begin{Lem}\label{G4}
Any connected graph in $\cG_4$ has a connected spanning subgraph
satisfying the properties given in Theorem~\ref{newfamily}.
\end{Lem}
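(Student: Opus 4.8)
The plan is to mimic the construction used in the proof of Lemma~\ref{triangles}. Recall that $\cG_4$ consists of those graphs in which every hole $C$ carries a private edge $e_C$ lying in no other induced cycle of $G$. The key structural observation we want is that the subgraph formed by ``almost all'' of the non-tree edges can be removed without destroying chordality, so that a spanning tree can be enlarged by one distinguished edge per hole into a chordal connected spanning subgraph $G'$ with $k(G')=1$. We must however be careful: unlike the three-triangle case, here $G$ may have arbitrarily many triangles, and the triangles need not behave nicely, so we cannot simply throw away one edge from each triangle. Instead the argument must be organized around the holes.

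First I would let $G$ be connected and in $\cG_4$, and fix for each hole $C\in H(G)$ an edge $e_C\in E(C)$ that belongs to no other induced cycle of $G$ (in particular $e_C$ lies in no triangle and no other hole). Note distinct holes get distinct private edges, since $e_C\in E(C)$ alone. Let $E_0:=\{e_C\mid C\in H(G)\}$ and consider the spanning subgraph $H_0:=G-E_0$ obtained by deleting all these private edges. I claim $H_0$ is chordal: if $H_0$ contained a hole $C'$, then $C'$ is a cycle of $G$, hence by Lemma~\ref{cycle} $\chi_{C'}$ is a sum of $\chi$'s of triangles and holes of $G$ all supported on $V(C')$; but $C'$ itself is an induced cycle of $G$ (its chordlessness in $H_0$ together with the fact that $E_0$ edges lie in no induced cycle other than their own hole forces $C'$ to remain chordless in $G$, unless a chord of $C'$ in $G$ is some $e_{C''}$ — but $e_{C''}$ lies in no induced cycle besides $C''$, and a chord of an induced-in-$H_0$ cycle would put $e_{C''}$ on a shorter cycle, contradiction), so $C'$ is a hole of $G$ with $e_{C'}\notin E(H_0)$, contradicting $C'\subseteq H_0$. (This is the step that needs the most care; see below.)

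Next, I would want $H_0$ to still contain all the triangles of $G$ — which it does, since each $e_C$ lies in no triangle — so $H_0$ is a chordal connected (it is connected because removing an edge that lies on a cycle, namely its hole $C$, keeps the graph connected, and we remove them one at a time) spanning subgraph containing every triangle of $G$, but $H_0$ need not be ``tree-plus-one-edge-per-hole''; rather we want $G'$ with $k(G')=1$. Here I invoke the standard fact (used implicitly in Lemma~\ref{triangles}): a connected chordal graph has competition number $1$. Actually $H_0$ itself works: it is connected, chordal, and contains all triangles of $G$, hence $k(H_0)=1$ and $H_0$ is a connected spanning subgraph of $G$ satisfying properties (a) and (b) of Theorem~\ref{newfamily}. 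Wait — but is $H_0$ really chordal? That is exactly the crux. The honest route, paralleling Lemma~\ref{triangles}, is: let $F$ be the forest obtained from the subgraph induced by triangle-edges by deleting one private-to-that-triangle edge per triangle where possible; but triangles need not have private edges in $\cG_4$. So instead I would take a spanning tree $T$ of $H_0$ (possible since $H_0$ is connected), and set $G':=T\cup E_0'$ where $E_0'$ is a maximal subset of $E(G)\setminus E(T)$ that can be added keeping $G'$ chordal. The delicate claim is that $G'$ so built contains all triangles of $G$; this uses that each triangle of $G$ has all three edges in $H_0$, and a greedy/perfect-elimination argument shows triangle edges are never the obstruction to chordality.

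The hard part, and where I expect to spend the real work, is proving that the spanning subgraph we build is chordal while still containing every triangle. The clean statement to aim for is: \emph{$H_0=G-\{e_C:C\in H(G)\}$ is chordal.} Once that is established, $H_0$ is connected, chordal, spanning, and contains all triangles, so $k(H_0)=1$ and Theorem~\ref{newfamily} applies directly, with no spanning-tree gymnastics needed. To prove chordality of $H_0$, suppose for contradiction $H_0$ has an induced cycle $C'$ of length $\ge 4$. Since $C'$ is induced in $H_0$ but perhaps not in $G$, any chord of $C'$ in $G$ is an edge of $E_0$, say $e_{C''}=xy$ with $x,y\in V(C')$ nonadjacent in $C'$; then $x$ and $y$ are joined by two internally disjoint paths along $C'$, each of length $\ge 2$, so $e_{C''}$ lies on a cycle of $G$ distinct from $C''$, and a shortest such yields an induced cycle through $e_{C''}$ other than $C''$ — contradicting the defining property of $e_{C''}$. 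Hence $C'$ is induced in $G$, i.e.\ $C'\in H(G)$, so $e_{C'}\in E_0$ and $e_{C'}\notin E(H_0)\supseteq E(C')$, a contradiction. This finishes the lemma.
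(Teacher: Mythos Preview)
Your proposal is correct and, once you settle on the clean route in your final paragraph, it is essentially the paper's own proof: both take $G':=G-\{e_C:C\in H(G)\}$ and verify that this is a connected spanning chordal subgraph containing every triangle of $G$ (hence $k(G')=1$). The only cosmetic differences are that the paper argues connectedness via an edge-cut contradiction rather than your iterative deletion of cycle edges, and that in the chordality step the paper invokes Lemma~\ref{cycle} explicitly on the cycle formed by a section of the putative hole together with the chord $e_{C''}$, where you instead write ``a shortest such yields an induced cycle through $e_{C''}$ other than $C''$.''
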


\begin{proof}
Take a graph $G \in \cG_4$. Then for every hole $C$ of $G$,
there exists $e_C \in E(G)$ such that $e_C$ is not contained
in any other induced cycle of $G$. Now let
\[
G' := G \setminus \{e_C \mid C \in H(G) \}.
\]
Since only edges are deleted, $G'$ is a spanning subgraph of $G$.
To show that $G'$ is connected by contradiction,
suppose that $G'$ is not connected.
Then $\{e_C \mid C \in H(G)\}$ contains an edge cut $F$ of $G$.
Take an $e_{C^*}=uv \in F$.
Then $(E(C^*) \setminus \{e_{C^*}\}) \cap E(G')$ contains
an edge $e^* \in F\setminus\{e_{C^*}\}$
since $u$ and $v$ belong to different components of $G'$.
Since $e^* \in F \setminus \{ e_{C^*} \} \subseteq 
\{ e_C \mid C \in H(G) \}
\setminus \{ e_{C^*} \}$,
$e^*=e_{C'}$ for a hole $C'$ distinct from $C^*$.
Then $e_{C'}$ belongs to both $C^*$ and $C'$,
which contradicts the choice of $e_{C'}$.

For each hole $C$, $e_C$ is not contained in any triangle of $G$.
Thus $G'$ contains all the triangles of $G$ and so
$G'$ satisfies the property (a).

To show that $G'$ is chordal by contradiction,
suppose that there exists a hole $C$ of $G'$.
Then $C$ is not a hole of $G$ by the definition of $G'$.
Thus $C$ contains a chord $e_{C'}=uv$ for some hole $C'$ of $G$.
Then the cycle formed by a $(u,v)$-section of $C'$ and $e_{C'}$ contains
an induced cycle of $G$ containing $e_{C'}$ by Lemma~\ref{cycle},
which contradicts the choice of $e_{C'}$.
Thus $G'$ satisfies the property (b).
\end{proof}

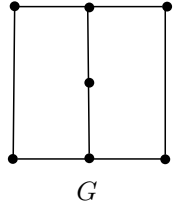
\begin{figure}[h]
\center{\scalebox{1}
{
\begin{pspicture}(0,-1.36375)(2.17,1.33375)
\psline[linewidth=0.02cm,fillcolor=black,dotsize=0.07055555cm 3.0]{**-}(0.09,1.32375)(0.07,-0.77625)
\psline[linewidth=0.02cm,fillcolor=black,dotsize=0.07055555cm 3.0]{**-}(2.07,-0.83625)(2.07,1.26375)
\psline[linewidth=0.02cm,fillcolor=black,dotsize=0.07055555cm 3.0]{**-}(2.16,1.25375)(0.14,1.25375)
\psline[linewidth=0.02cm,fillcolor=red,dotsize=0.07055555cm 3.0]{**-}(0.0,-0.76625)(2.02,-0.76625)
\psdots[dotsize=0.13](1.07,1.24375)
\psline[linewidth=0.02cm](1.05,1.26375)(1.07,-0.69625)
\psdots[dotsize=0.13](1.07,-0.75625)
\psdots[dotsize=0.13](1.07,0.24375)
\usefont{T1}{ptm}{m}{n}
\rput(1.0414063,-1.18625){$G$}
\end{pspicture}
}}
\caption{$G$ does not belong to $\cG_4$ while $G$ has
a connected spanning subgraph satisfying the properties
given in Theorem~\ref{newfamily}.}
\label{conversenottrue}
\end{figure}

It is worth noting that even if a graph has a connected spanning subgraph
satisfying the properties given in Theorem~\ref{newfamily},
it may not belong to $\cG_4$
(see the graph given in Figure~\ref{conversenottrue}).

By Theorem~\ref{newfamily} and Lemma~\ref{G4},
the following proposition holds:

\begin{Prop}
For any connected graph $G$ in $\cG_4$, $k(G) \le \dim \cH(G)+1$.
\end{Prop}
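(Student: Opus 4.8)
The plan is to deduce the proposition immediately from the two results already in hand: Lemma~\ref{G4} and Theorem~\ref{newfamily}. Given a connected graph $G \in \cG_4$, the first step is to invoke Lemma~\ref{G4} to obtain a connected spanning subgraph $G'$ of $G$ that satisfies properties (a) and (b) of Theorem~\ref{newfamily}; that is, $G'$ contains every triangle of $G$ and $k(G')=1$. Since $G$ is connected and $G'$ is such a subgraph, the hypotheses of Theorem~\ref{newfamily} are met.

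The second and final step is to apply Theorem~\ref{newfamily} directly to the pair $(G,G')$, which yields $k(G) \le \dim \cH(G)+1$. No further argument is needed, so the proof is one line modulo the cited results.

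Strictly speaking there is no hard part left in this last statement; the substance of the argument lives in Lemma~\ref{G4}, where one must verify that deleting, for each hole $C$, a distinguished edge $e_C$ lying in no other induced cycle leaves a connected, chordal spanning subgraph containing all triangles of $G$ — the connectivity check via edge cuts and the chordality check via Lemma~\ref{cycle} being the two delicate points. Those having been dispatched earlier in the paper, the only thing to be careful about here is that Theorem~\ref{newfamily} is stated for connected $G$, so one should note at the outset that membership in $\cG_4$ is being taken together with connectedness, exactly as in the statement. Hence the proposition follows.
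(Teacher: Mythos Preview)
Your proposal is correct and matches the paper's own argument exactly: the paper states the proposition immediately after the sentence ``By Theorem~\ref{newfamily} and Lemma~\ref{G4}, the following proposition holds,'' with no additional proof given. Your write-up simply spells out this two-step deduction, which is precisely the intended route.
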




\begin{thebibliography}{99}

\bibitem{ck}
H. H. Cho and S. -R. Kim:
The competition number of a graph having exactly one hole,
{\it Discrete Math.} {\bf 303} (2005) 32--41.
 
\bibitem{co}
J. E. Cohen:
Interval graphs and food webs: a finding and a problem,
RAND Corporation Document 17696-PR, Santa Monica, CA, 1968.

\bibitem{dstl}
R. Diestel: 
{\it Graph Theory}, 
Graduate Texts in Mathematics {\bf 173} 
(Springer-Verlag, New York, 2000). 


\bibitem{Kamibeppu}
A. Kamibeppu:
An upper bound for the competition numbers of graphs,
{\it Discrete Appl. Math.} {\bf 158} (2010) 154--157.

\bibitem{kimsu}
S. -R. Kim:
The competition number and its variants,
in {\it Quo Vadis, Graph Theory?}
(J. Gimbel, J. W. Kennedy, and L. V. Quintas, eds.),
Annals of Discrete Mathematics {\bf 55},
North Holland B. V., Amsterdam, the Netherlands, (1993) 313-326.

\bibitem{compone}
S. -R. Kim:
Graphs with one hole and competition number one,
{\it J. Korean Math. Soc.} {\bf 42} (2005) 1251--1264.

\bibitem{KLS}
S. -R. Kim, J. Y. Lee, and Y. Sano: 
The competition number of a graph whose holes do not overlap much,
{\it Discrete Appl. Math.} {\bf 158} (2010) 1456--1460.

\bibitem{KPS}
S. -R. Kim, B. Park, and Y. Sano:
The competition numbers of complete multipartite graphs with many partite sets, Preprint, http://www.kurims.kyoto-u.ac.jp/preprint/file/RIMS1644.pdf.

\bibitem{KF}
S. -R. Kim and F. S. Roberts:
Competition number of graphs with a small number of triangles,
{\it Discrete Appl. Math.} {\bf 78} (1997) 153--162.

\bibitem{twoholes}
J. Y. Lee, S. -R. Kim, S. -J. Kim, and Y. Sano:
The competition number of a graph with exactly two holes,
{\it Ars Combin.} {\bf 95} (2010) 45--54.

\bibitem{smallcomp}
J. Y. Lee, S. -R. Kim, S. -J. Kim, and Y. Sano:
Graphs having many holes but with small competition number,
{\it Appl. Math. Lett.} {\bf 24} (2011) 1331-1335. 

\bibitem{indep}
B. -J. Li and G. J. Chang:
The competition number of a graph with exactly $h$ holes, all
of which are independent,
{\it Discrete Appl. Math.} {\bf 157} (2009) 1337--1341.

\bibitem{LC}
B. -J. Li and G. J. Chang:
The competition number of graph with exactly two holes,
{\it J. Combin. Optim.}, DOI:10.1007/s10878-010-9331-9 

\bibitem{lu}
J. R. Lundgren: 
Food webs, competition graphs,
competition-common enemy graphs, and niche graphs, in 
{\it Applications of Combinatorics and Graph Theory 
to the Biological and Social Sciences}, 
(F.\ S.\ Roberts, ed.), IMH Volumes in 
Mathematics and Its Application {\bf 17}, 
Springer-Verlag, New York, (1989) 221-243.

\bibitem{op}
R. J. Opsut:
On the computation of the competition number of a graph,
{\it SIAM J. Alg. Discr. Meth.} {\bf 3} (1982) 420--428.

\bibitem{RayRob}
{A. Raychaudhuri and F. S.\ Roberts}: 
{Generalized competition graphs and their applications}, 
{\it I{X} symposium on operations research. {P}art {I}. {S}ections 
1--4 ({O}snabr\"uck, 1984)},
{Methods of Operations Research} {\bf 49}, 
{Athen\"aum/Hain/Hanstein}, {K\"onigstein}, (1985) 295-311. 


\bibitem{cn}
F. S. Roberts:
Food webs, competition graphs, and the boxicity of ecological phase
space, 
{\it Theory and Applications of Graphs} 
(Y.\ Alavi and D.\ Lick, eds.), 
Springer Verlag, New York, (1978) 477-490. 


\end{thebibliography}
\end{document}